\newtheorem{theorem}{Theorem}
\newtheorem{remark}[theorem]{Remark}
\newtheorem{corollary}[theorem]{Corollary}
\newtheorem{lemma}[theorem]{Lemma}
\newtheorem{definition}[theorem]{Definition}
\newtheorem{example}[theorem]{Example}
\begin{document}

\title{Planarity of Mycielski-like graph expansions}

\author{Devansh Vimal}
\address{Biodesign Center for Biocomputing, Security and Society, Arizona State University, Tempe, Arizona, 85281}
\email{dvp0@asu.edu}
\urladdr{agentelement.net}

\begin{abstract}
For a graph $G$, we define its great shadow $S(G)$ as a construction that
duplicates each vertex $v$ in $G$ and sets this duplicated vertex
adjacent to $v$ and all neighbors of $v$.
Great graph shadows arise naturally in the routing of diode-and-switch circuits for computer keyboards,
and are closely related to the Mycielski operation.
These diode-and-switch circuits can be routed on a single-sided printed-circuit board
if and only if the corresponding great shadow is planar.
In this paper, we characterize all graphs with planar great shadows.
Such graphs are always bipartite cactus graphs.
\end{abstract}

\maketitle

\section{Introduction}\label{sec:intro}
The \emph{great shadow}, $S(G)$,
of a graph $G$ duplicates every vertex $v$ in $G$
and sets this duplicated vertex incident to $v$ and all neighbors of $v$.
The great shadow is similar to Mycielski's construction \cite{mycielski1955},
a construction well-studied for the property of increasing the
chromatic number of a graph while also keeping the graph triangle-free
(if $G$ is triange-free).
Mycielski's construction duplicates each vertex $v \in G$
and introduces one special vertex $c$.
Then, it sets each duplicated vertex incident to
all neighbors of $v$ and incident to $c$.
In $S(G)$, we omit the vertex $c$,
and set the duplicated vertex incident to $v$ instead.
Our main contribution of this paper is a characterization of all graphs with planar great shadows.

\begin{theorem}\label{thm:main}
    Let $G$ be a connected graph.
    Then $S(G)$ is planar if and only if $G$ is a bipartite cactus graph.
\end{theorem}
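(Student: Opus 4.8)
The plan is to exploit the local structure of $S(G)$ and reduce planarity to a block-by-block analysis. First I would record two structural facts. For every edge $uv$ of $G$, the four vertices $u,v,u',v'$ induce a copy of $K_4$ minus the edge $u'v'$ (a ``diamond''), since $u'$ is adjacent to $u,v$ and $v'$ is adjacent to $u,v$ while $u'v'$ is never an edge; moreover all diamonds through a common vertex $u$ share the single edge $uu'$ (the ``spine'' of $u$). Second, if $H\subseteq G$ then $S(H)\subseteq S(G)$, because every edge created by the shadow of $H$ is also created by the shadow of $G$. Hence non-planarity is inherited from subgraphs, which is what I would use in the necessity direction. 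I would also use that a connected graph is a bipartite cactus exactly when each of its blocks is a single edge or an even cycle, equivalently when $G$ has no odd cycle and no theta subgraph.

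For the ``if'' direction I would argue that $S(G)$ is a tree of $2$-sums. If $x$ is a cut vertex of $G$ splitting off a block $B$, then $S(G)=S(G_0)\cup S(B)$ where the two pieces meet exactly in the edge $xx'$; thus $\{x,x'\}$ is a $2$-cut and $S(G)$ is built from the $S(B_i)$ by $2$-sums along spine edges, following the block tree of $G$. Since planar graphs are closed under $2$-sums along an edge (embed each factor with the shared edge on its outer face and glue), $S(G)$ is planar as soon as $S(B)$ is planar for every block $B$. For a block that is an edge this is the diamond, which is planar; for a block that is an even cycle $C_{2k}$ I would give an explicit embedding, drawing the cycle as a Jordan curve and placing the duplicate $i'$ inside the curve when $i$ is odd and outside when $i$ is even. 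This is a legal placement precisely because $2k$ is even, so consecutive duplicates land on opposite sides and the edges from $i'$ to $i-1,i,i+1$ do not collide with those from $(i+1)'$.

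For the ``only if'' direction I would prove the contrapositive by locating a Kuratowski subgraph inside $S$ of a forbidden structure. If $G$ is not a bipartite cactus it contains an odd cycle or a bipartite theta subgraph, so it suffices to show $S(C_{2k+1})$ and $S(\Theta_{a,b,c})$ are non-planar. For the odd cycle I would run a Jordan-curve parity argument: in any planar embedding the originals trace a closed curve $\gamma$, each duplicate $i'$ lies strictly inside or strictly outside $\gamma$, and the crux is that consecutive duplicates $i'$ and $(i+1)'$ must lie on opposite sides. Indeed, if both lay inside, the $4$-cycles $(i{-}1)\,i\,(i{+}1)\,i'$ and $i\,(i{+}1)\,(i{+}2)\,(i{+}1)'$ bound disks that both hug the shared cycle edge $i(i{+}1)$ from the inside, forcing their boundaries to cross or to nest, and nesting traps $i'$ (resp. $(i{+}1)'$) away from its remaining neighbor $i-1$ (resp. $i+2$). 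This ``opposite sides'' rule is a proper $2$-colouring of the cycle $1'2'\cdots n'$ by side, impossible when $n$ is odd. For a bipartite theta with hubs $p,q$ I would exhibit an explicit $K_{3,3}$-subdivision: when all three paths have length at least $2$, join $\{p,q,p'\}$ to the three first-internal vertices $\{s_1,s_2,s_3\}$ (via the edges $p\,s_i$, $p's_i$ and the path tails from $q$); the degenerate case of a length-one path is handled by the symmetric triple $\{p,p',q'\}$. The model instance $S(K_{2,3})\supseteq K_{3,3}$, with parts $\{a,b,c\}$ and $\{p,q,p'\}$, is the clean prototype.

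The step I expect to be the main obstacle is making the ``consecutive duplicates lie on opposite sides'' claim fully rigorous, since it is the one genuinely topological ingredient: the Euler/edge-count bound does not forbid $S(C_{2k+1})$, so the parity must be extracted from the Jordan-curve geometry, and the disk-overlap argument needs the small case $n=3$ (where $i+2=i-1$) dispatched separately, which one does by checking $S(C_3)\supseteq K_{3,3}$ directly. A secondary but routine difficulty is verifying internal disjointness in the theta $K_{3,3}$-subdivision uniformly across the parity cases $a\equiv b\equiv c\pmod 2$.
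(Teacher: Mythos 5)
Your proposal is sound and, on the necessity side, takes a genuinely different route from the paper; the sufficiency side is essentially the paper's argument. For sufficiency, both proofs decompose the bipartite cactus along its tree structure (your block tree versus the paper's ``cycle tree''), draw the shadow of each piece with the spine edge $xx'$ exposed on the outer face, and glue; the base cases agree ($S(K_2)$ is the diamond, and $S(C_{2k})$ gets the same alternating inside/outside embedding). The only organizational difference is that you absorb bridges as $K_2$-blocks, while the paper glues across a bridge in a separate case by adding the three edges $uv$, $uv'$, $u'v$. For necessity, the paper exhibits explicit $TK_{3,3}$ subgraphs in both cases: two alternating original/shadow paths around an odd cycle, and, for a theta ($TK_4^-$), a two-case analysis according to whether all three path lengths are even or all odd (mixed parities being deferred to the odd-cycle lemma). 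Your theta construction is simpler and more general than the paper's: joining $\{p,q,p'\}$ to the three first internal vertices of the theta paths (or using $\{p,p',q'\}$ against $\{s_1,s_2,q\}$ when $pq$ is an edge) needs only one shadow vertex, requires no parity split, and in fact works for non-bipartite thetas as well. Your odd-cycle argument is the one place where you trade an explicit subdivision for topology: the claim that consecutive shadows lie on opposite sides of the cycle is correct and can be made rigorous --- the three edges from $i'$ into the disk cut it into three regions, and no region has all of $i$, $i+1$, $i+2$ on its boundary, so $(i+1)'$ cannot share the side of $i'$ (this needs $n \geq 4$, with $n=3$ dispatched by $K_{3,3} \subseteq S(C_3)$, exactly as you flag) --- whereas the paper's alternating paths handle every odd $n$, including $n=3$, uniformly and avoid Jordan-curve reasoning altogether. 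Both routes are valid; yours buys a cleaner, parity-free theta lemma, while the paper's buys a purely combinatorial Kuratowski-subgraph proof of necessity.
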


Cactus graphs are graphs where no two cycles share an edge.
In a bipartite cactus, every cycle is of even order.

\subsection{Origin of the problem}\label{sec:origin}
Electrical circuits may be represented by undirected graphs
with components on their vertices or edges.
Circuits are commonly routed on printed-circuit boards with multiple layers,
and such boards are cheapest to manufacture when they have as few layers as possible.
If the graph of a circuit is planar,
then the circuit can be routed on a single-sided printed-circuit board,
minimizing its manufacturing cost.
Our problem arises in the context of designing inexpensive mechanical computer keyboards.

Computer keyboards are simple circuits,
and in our model,
they consist of
a single microcontroller with $n$ pins connected in some manner to
$m$ diodes and $m$ electrical button switches.
An edge in the graph of our circuit is either a conductive wire,
or a diode and switch in series.
Multiple edges are allowed between two vertices,
and a vertex may be attached to a microcontroller pin.
Each pin of the microcontroller has three states: HIGH, LOW, and INPUT.
An edge $u v$ is \emph{conductive} if either it is a conductive wire, or if
there is a diode with its anode connected to $u$, cathode connected to $v$,
and the switch on $u v$ is depressed.
Two nonadjacent vertices $x$ and  $y$ are on a conductive path
if there is a path $x u_1, \dots, u_\ell y$ of conductive edges.
When a pin of a microcontroller on vertex $v$ is set HIGH,
all microcontroller pins on a vertex with a conductive path from $v$
and set to the INPUT state recieve the HIGH signal.
Otherwise, all microcontroller pins set to the INPUT state recieve a LOW signal.

The microcontroller must first simultaneously set all of its pins
to the HIGH, LOW, or INPUT states,
then may read the values of all pins set to the INPUT state
before performing any computation on the resulting input.
With this set-up,
the microcontroller must respond to the following query:
Is switch $1 \leq i \leq m$ pressed?
In some circuits,
it may not be possible to uniquely distinguish two simultaneously pressed switches,
so here we consider only circuits where the query can be answered for all switches.
Consequently,
we are limited to at most two switches per pair of vertices
(as we can have at most two edges per pair of vertices,
each with switch and a diode in either direction),
and a maximum of $n (n - 1)$ switches
(as a maximally dense circuit takes the form of a clique).\footnote{
    The circuit routing scheme with $n (n - 1)$ switches is called \emph{Charlieplexing},
    first described by Maxim electronics for their MAX6951 LED display driver
    \cite{lancaster2001musing,maxim2003reduced}.}
The state of every switch
is obtained by repeating the query for all $m$ switches,
a procedure called a \emph{scan}.
In order to ensure that the delay between the press of a switch
and its detection between the microcontroller is imperceptible to a typist,
scans may be executed several hundred times per second.

However, repeatedly scanning the circuit is power-inefficent.
Electrical engineers solve this by introducing an \emph{interrupt} pin:
a special pin, that when set HIGH,
triggers a single scan.
With an interrupt pin,
the microcontroller need not continuously scan,
and instead may idle when no switch is being pressed.
Introducing this interrupt pin requires a modification of the original circuit.
We will observe that if the graph of the original circuit is $G$,
then the graph of the circuit with the interrupt pin is $S(G)$.

Suppose each pair of connected vertices in $G$ has both directions of switches between them.
Each edge that directs a HIGH signal (when that edge is conductive) to any vertex $v$
must also direct that signal to the interrupt pin.
However,
we cannot connect the interrupt pin by conductive edges directly to $v$,
as $v$ may go HIGH itself during a scan.
Therefore,
the interrupt pin must be connected by conductive edges
to an intermediate vertex $v'$
such that all edges that are directed toward $v$ in $G$
are directed towards $v'$ in the interrupted circuit instead,
and an edge with a diode but no switch is inserted between $v$ and $v'$.
Therefore, for each vertex $v$,
one creates an intermediate vertex $v'$
that is connected to each neighbor of $v$ by a switch and diode,
and is connected to $v$ by only a diode.
This construction is precisely $S(G)$.
Additionally,
each vertex $v'$ may be connected by conductive edges
to a single vertex attached to the interrupt pin.
We omit these edges from $S(G)$,
as one can just place multiple interrupt pins on each $v'$.
But we note that the connection to the Mycielskian of $G$
is even stronger with these additional edges:
the additional edges all connect the vertices $v'$ to a central vertex,
so this construction differs from the Mycielskian only because it has
edges between each $v$ and $v'$.

\subsection{Related work}\label{sec:relwork}
An ordinary shadow of $G$ --- which we call $s(G)$ ---
is a construction that duplicates every vertex $v$ in $G$
and sets this duplicated vertex incident to all neigbors of $v$.
The shadow is a subgraph of the great shadow, as
edges between $v$ and $v'$ are absent in the shadow but present in the great shadow.
The Mycielskian of $G$ without its central vertex produces $s(G)$.
The planarity of $s(G)$ is studied by Garza and Shinkel \cite{garza1999shadow}.
They give a complete characterization of graphs with planar shadows.

\begin{theorem}[Garza and Shinkel \cite{garza1999shadow}]\label{thm:smallshadow}
    Let $G$ be a nontrivial connected graph.
    Then $s(G)$ is planar if and only if
    every block of $G$ is $K_2$, $K_3$, $K_4^-$, $K_4$, or an even cycle,
    and $G$ has the following properties:
    (1) every cut vertex of $G$ has degree at most 2 in every block containing it,
    and
    (2) if $K_3$ is a block of $G$ then not all three vertices of the block are cut vertices of $G$.
\end{theorem}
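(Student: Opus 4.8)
The plan is to prove both directions through the block--cut-tree of $G$, using throughout the \emph{monotonicity} of the shadow: if $H$ is a subgraph of $G$ then $s(H)$ is a subgraph of $s(G)$, since every edge $uv$ of $H$ contributes exactly the edges $uv$, $u'v$, $uv'$, all of which survive in $s(G)$. I would first record this lemma, as it immediately yields the easy half of necessity: every block $B$ of $G$ is a subgraph of $G$, so if $s(G)$ is planar then each $s(B)$ is planar, and, more generally, any ``obstruction'' subgraph with nonplanar shadow rules out planarity of $s(G)$. I would also note the dual reduction tool, that a degree-$2$ vertex of $s(G)$ may be suppressed without affecting planarity: the duplicate $v'$ of a degree-$2$ original vertex $v$ always has degree $2$, so suppressing it replaces $v'$ by a ``shortcut'' edge between the two neighbours of $v$.

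Second, I would classify the $2$-connected blocks in isolation, proving that $s(B)$ is planar exactly when $B$ is $K_2$, $K_3$, $K_4^-$, $K_4$, or an even cycle. The positive cases are handled by explicit embeddings; for an even cycle $C_{2k}$ the key trick is to $2$-colour the cyclic positions and draw the duplicate $v_i'$ inside the cycle when $i$ is even and outside when $i$ is odd, so the two families of skip-arcs never meet. For the negative cases, suppressing every duplicate in $s(C_n)$ yields the circulant $C_n^2$; this is $K_5$ when $n=5$ and contains a $K_{3,3}$-subdivision for every odd $n \ge 7$, so odd cycles are excluded, and the parity obstruction (an odd cycle of positions cannot be properly $2$-coloured) explains why parity is exactly what matters. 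Any remaining $2$-connected block is a non-cycle on at least five vertices, since every $2$-connected graph on at most four vertices is $C_3$, $C_4$, $K_4^-$, or $K_4$; such a block contains a theta subgraph on at least five vertices, and I would show each such theta already has nonplanar shadow. The generating cases are $K_{2,3}$ — whose duplicates suppress to reveal $\{a,a',b,b'\}$ against $\{x,y,z\}$, a $K_{3,3}$ — and the length-$(1,2,3)$ theta, with longer thetas reduced to these by the shortcut-suppression device.

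Third, and this is where conditions $(1)$ and $(2)$ enter, I would analyse cut vertices, since the shadow does \emph{not} split over blocks: the duplicate $v'$ of a cut vertex $v$ is joined to the neighbours of $v$ in every block at $v$ simultaneously, and each neighbour $u$ contributes a back-edge $u'v$. For necessity, I would show that if a cut vertex $v$ has degree $\ge 3$ in some block $B$ while also meeting another block, then $v$, $v'$, together with three neighbours of $v$ in $B$ and a neighbour in the other block, host a Kuratowski subgraph (a subdivision of $K_5$ or $K_{3,3}$) in $s(G)$; the $K_3$-specific condition $(2)$ is the borderline case in which $B=K_3$ has its entire vertex set shared, ruled out by the same kind of explicit subdivision even though its vertices have degree $2$. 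For sufficiency, I would assemble a planar embedding of $s(G)$ by induction on the block--cut-tree: embed each block's shadow with the cut vertex $v$, its duplicate $v'$, and the duplicates of $v$'s block-neighbours all placed on a common face, then glue successive blocks into that face; conditions $(1)$ and $(2)$ are precisely what guarantee that at each degree-$2$ attachment point the required vertices are simultaneously accessible on a single face.

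The main obstacle is the cut-vertex analysis of the third step, in both directions. The difficulty is that $s$ is not a local operation at a cut vertex $v$: the vertices $v$, $v'$, and the back-edges $u'v$ are shared among all blocks incident to $v$, so the block shadows overlap rather than meeting cleanly, and I cannot simply take a disjoint union of planar pieces. Making the gluing rigorous requires tracking the cyclic orders around $v$ and $v'$ and verifying that conditions $(1)$ and $(2)$ always leave a common face available; conversely, turning ``too much degree at a shared vertex'' into an explicit subdivision of $K_5$ or $K_{3,3}$ is the combinatorial heart of necessity. By contrast, the single-block computations of the second step, though numerous, are routine once the suppression lemma is in hand.
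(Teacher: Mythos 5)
First, a point about the comparison itself: this paper does not prove Theorem~\ref{thm:smallshadow} at all --- it is quoted from Garza and Shinkel \cite{garza1999shadow} as related work --- so there is no internal proof to measure you against, and your proposal can only be judged on its own terms and against the analogous arguments the paper gives for the great shadow $S(G)$. Your scaffolding is sound and in fact closely parallels those arguments: your monotonicity lemma is the $s$-analogue of Lemma~\ref{lem:subgraph}, your inside/outside parity embedding of $s(C_{2k})$ is the drawing of Lemma~\ref{lem:evenplanar}, and your exclusion of odd cycles via the square $C_n^2$ is exactly the observation of Remark~\ref{rmk:altbipartitenecc}.

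The genuine gap is the sentence ``with longer thetas reduced to these by the shortcut-suppression device.'' The shadow does not commute with subdivision: suppressing the degree-$2$ duplicate $v'$ of a degree-$2$ vertex $v$ inside $s(\theta)$ adds a shortcut edge between the neighbours of $v$ but leaves $v$ itself, its two original edges, and the back-edges from its neighbours' duplicates all in place, so the suppressed graph is $\theta$ plus shortcuts --- it is \emph{not} the shadow of a shorter theta, and in general $s(TH)$ is neither a subdivision of $s(H)$ nor reducible to it by subgraph arguments. This is precisely why the present paper proves its Lemma~\ref{lem:cactusnecc} by explicit disjoint-path constructions in $S(\theta(\ell,m,n))$ for \emph{every} $\ell,m,n$, split into parity cases, rather than by reducing to one small theta. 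Your two generating cases are also miscalibrated: the $(1,2,3)$-theta contains $C_5$ and is already disposed of by your odd-cycle argument, while the thetas that genuinely survive that argument --- those with all three paths of even length (e.g.\ lengths $2,2,4$) and the triangle-free odd-odd-odd cases (lengths $1,3,3$ or $3,3,3$, which contain no odd cycle whatsoever) --- are not subgraphs of, nor suppressible to, $K_{2,3}$, and each parity class needs its own explicit Kuratowski witness. Secondarily, you rightly flag the cut-vertex analysis behind conditions (1) and (2) as the heart of the theorem, but the proposal only names it: your sketched witness (a cut vertex $v$ of block-degree $\geq 3$, its duplicate $v'$, three block-neighbours, and a neighbour $w$ in another block) does not immediately yield a $K_{3,3}$ subdivision, because $w$ reaches the block only through $v$ and $v'$, which are already spent as branch vertices; the third branch vertex must instead be routed inside $B - v$ and its shadow, and the $K_3$ exception of condition (2) --- where every block-degree is $2$ yet planarity still fails --- shows that the degree threshold alone is not the operative invariant. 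As written, the proposal is a plausible programme whose two load-bearing steps remain unproved.
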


Cactus graphs have been studied in other contexts around electrical circuits,
particularly circuits of operational amplifiers \cite{nishi1986uniqueness,nishi1986topological}.

\section{Notation}\label{sec:notation}
All graphs considered hereafter are simple and loopless.
If $G$ and $H$ are graphs, we write $H \subseteq G$ to mean that $H$ is a
(not necessarily induced) subgraph of $G$.
We write $T G$ to denote any topological graph obtained from $G$,
constructed by subdividing the edges of $G$.
For any two graphs $G$ and $G'$,
if $T G$ is a subgraph of $G'$,
then we say that $G$ is a minor of $G'$.
We write $K_n$ to denote the complete graph on $n$ vertices,
$P_n$ to denote a path graph with $n$ vertices,
and $K_{m,n}$ to denote a complete bipartite graph
with one partition having $m$ vertices
and the other partition having $n$ vertices.
Of special interest to us are the graphs
$K_{3, 3}$ and $K_4^-$
($K_4$ minus one edge, also commonly known as the diamond graph).
A cactus graph is a connected graph where each $2$-connected component is a cycle.
Equivalently,
a cactus graph is a graph where no two cycles share an edge \cite{el1988complexity}.
If a graph $G$ has a vertex $v$ such that $G - v$ is disconnected, then
$v$ is called a cut vertex.

\section{Preliminaries}\label{sec:prelim}

First, we formally define the great shadow.
\begin{definition}[Great shadow graph, $S(G)$]\label{def:shadow}
    Let $G$ be a graph on $n$ vertices. Define $S(G)$ as a graph on $2n$ vertices.
    For each vertex $v$,
    introduce an associate vertex $v'$,
    called the shadow vertex of $v$,
    and set $v'$ adjacent to $v$ and all neighbors of $v$ in $G$.
\end{definition}

The great shadow operation has a few properties that we use throughout the paper.
We note that two shadow vertices are never adjacent.
Also, for any two graphs $G$ and $H$, if $H \subseteq G$ then $S(H) \subseteq S(G)$.
We prove this below.

\begin{lemma}\label{lem:subgraph}
    If $H \subseteq G$ then $S(H) \subseteq S(G)$.
\end{lemma}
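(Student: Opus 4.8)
The plan is to prove the containment $S(H) \subseteq S(G)$ directly from the definition of the great shadow, by exhibiting an explicit injective map on vertices and verifying that every edge of $S(H)$ maps to an edge of $S(G)$. Since $H \subseteq G$, the vertex set $V(H)$ is a subset of $V(G)$, and the great shadow construction assigns to each $v \in V(H)$ a unique shadow vertex $v'$, and likewise for each $v \in V(G)$. First I would fix the obvious vertex embedding: send each original vertex $v \in V(H)$ to the same vertex $v \in V(G)$, and send each shadow vertex $v' \in V(S(H))$ to the corresponding shadow vertex $v' \in V(S(G))$. This is injective because the map on originals is the identity inclusion and the map on shadows is a bijection onto the shadow vertices of $V(H)$, and the two classes of vertices (original versus shadow) are kept separate.

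Next I would check edge preservation by cases, according to the three kinds of edges that can occur in a great shadow. By Definition~\ref{def:shadow}, the edges of $S(H)$ are exactly: (i) the original edges $uv$ with $uv \in E(H)$; (ii) the "spoke" edges $v v'$ joining each vertex to its own shadow; and (iii) the "neighbor" edges $u v'$ where $u$ is a neighbor of $v$ in $H$. For type (i), since $H \subseteq G$ we have $uv \in E(H) \subseteq E(G)$, so $uv$ is an edge of $S(G)$ as well. For type (ii), every vertex of $G$ is adjacent to its own shadow in $S(G)$, so $v v' \in E(S(G))$ for every $v \in V(H) \subseteq V(G)$. For type (iii), if $u$ is a neighbor of $v$ in $H$ then $uv \in E(H) \subseteq E(G)$, so $u$ is also a neighbor of $v$ in $G$, and hence $u v' \in E(S(G))$ by the definition of the great shadow. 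In each case the image of an edge of $S(H)$ is an edge of $S(G)$, which establishes that $S(H)$ is a subgraph of $S(G)$.

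This lemma is essentially bookkeeping rather than a genuine obstacle; the only point that requires a moment's care is the type (iii) case, where one must observe that the neighbor relation used to define shadow adjacencies is inherited by $G$ from $H$ precisely because subgraph inclusion preserves edges. There is no issue with induced versus non-induced subgraphs here: the great shadow only \emph{adds} edges based on the edges already present, so passing to a (not necessarily induced) subgraph $H$ can only remove shadow edges, never create spurious ones in $S(H)$ that are absent in $S(G)$. I expect the proof to be short, and I would present it as the straightforward verification of the three edge types above.
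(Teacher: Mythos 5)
Your proof is correct and takes essentially the same approach as the paper: a direct verification from Definition~\ref{def:shadow} that every edge of $S(H)$ survives into $S(G)$, with the key point being that $H \subseteq G$ implies $N_H(v) \subseteq N_G(v)$. The only cosmetic difference is that you split the shadow edges into two cases (spoke edges $vv'$ and neighbor edges $uv'$) where the paper treats them uniformly as edges incident to shadow vertices.
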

\begin{proof}
    We have $H \subseteq G \subseteq S(G)$,
    so it suffices to determine
    $S(H) - E(H) \subseteq S(G) - E(G)$.
    Let $v \in H \subseteq G$ and $v' \in S(H) \subseteq S(G)$.
    As $H \subseteq G$,
    we have
    $N_H (v) \subseteq N_G (v)$,
    and therefore $N_{S(H)}(v') \subseteq N_{S(G)}(v')$.
    All edges in $S(H) - E(H)$ take the form
    $v' n$ for some $n \in N_{S(H)} (v')$,
    and therefore there is a corresponding edge $v' n \in S(G) - E(G)$.
\end{proof}

We invoke Kuratowski's theorem \cite{kuratowski1930probleme}
to prove the non-planarity of a given graph.
His theorem gives a characterization of planar graphs
by a set of forbidden subgraphs.
If we find any such subgraph in a graph that we are studying,
then that subgraph is a \emph{witness} of non-planarity.
\begin{theorem}[Kuratowski \cite{kuratowski1930probleme}]\label{thm:kuratowski}
    A graph is planar if and only if it contains neither $T K_5$ nor
    $T K_{3, 3}$ as a subgraph.
\end{theorem}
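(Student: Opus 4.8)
The plan is to prove the two directions separately, with the forward (``only if'') direction being routine and the converse carrying the real weight. I would first record that planarity is preserved under both subdivision and passage to subgraphs: subdividing an edge neither creates nor destroys a planar embedding, and any subgraph of a plane graph inherits its embedding. Hence it suffices for the easy direction to check that $K_5$ and $K_{3,3}$ are themselves nonplanar, and then to observe that any $G$ containing $T K_5$ or $T K_{3, 3}$ must be nonplanar as well. Nonplanarity of the two base graphs follows from Euler's formula: a connected plane graph with $v$ vertices and $e$ edges has $e \le 3v - 6$, improving to $e \le 2v - 4$ when the graph is bipartite, and $K_5$ violates the first bound ($10 > 9$) while the bipartite $K_{3,3}$ violates the second ($9 > 8$).

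For the converse I would argue the contrapositive by induction on $|E(G)|$: assuming $G$ contains neither $T K_5$ nor $T K_{3, 3}$, I would produce a planar embedding. The first move is to reduce to the $3$-connected case. If $G$ is disconnected or has a cut vertex, each block is a proper Kuratowski-free subgraph, hence planar by induction, and their embeddings glue together at the shared cut vertices. If $G$ is $2$-connected but not $3$-connected, I would take a $2$-separation $\{x, y\}$, form the two sides, and add a \emph{virtual} edge $xy$ to each. A short argument shows each augmented side remains free of $T K_5$ and $T K_{3, 3}$ --- otherwise a forbidden subdivision on one side could be completed through an $x$--$y$ path in the other side to yield such a subdivision in $G$ itself --- so induction gives planar embeddings of both sides in which $xy$ bounds a face, and these can be pasted along $xy$.

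It then remains to show that a $3$-connected graph with no $T K_5$ or $T K_{3, 3}$ is planar, which is the crux. Here I would invoke the edge-contraction lemma that every $3$-connected graph on at least five vertices has an edge $e$ whose contraction $G/e$ is again $3$-connected, with $K_4$ as the base case. Passing from subdivisions to minors via the standard equivalence --- containing $K_5$ or $K_{3,3}$ as a minor is the same as containing one of them as a topological minor, since $K_{3,3}$ has maximum degree three and a $K_5$-minor always forces a $T K_5$ or $T K_{3, 3}$ --- one checks that $G/e$ is still Kuratowski-free, so by induction it admits a (convex) planar embedding. Re-expanding the contracted vertex, I would split its incident edges between the two endpoints of $e$ according to their cyclic order around the relevant face; either this split is consistent with a plane drawing, giving the desired embedding of $G$, or the obstruction to consistency exhibits a subdivision of $K_5$ or $K_{3,3}$, contradicting the hypothesis.

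I expect the $3$-connected case, and specifically the re-expansion step, to be the main obstacle: one must track the cyclic arrangement of the contracted vertex's neighbors on its face boundary and carry out a careful case analysis showing that any failure to re-embed planarly manufactures one of the two forbidden subdivisions. The secondary technical point requiring care is the passage between the subdivision formulation in the statement and the minor formulation that contraction naturally respects, which is exactly where the degree-three structure of $K_{3,3}$ and the minor-to-subdivision lemma for $K_5$ must be applied precisely.
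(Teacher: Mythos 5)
This statement is Kuratowski's theorem itself, which the paper does not prove: it is imported as a black box (cited to \cite{kuratowski1930probleme}) and used only to certify non-planarity via $TK_{3,3}$ witnesses, so there is no internal proof to compare yours against. On its own merits, your outline is the standard modern proof (essentially Thomassen's, as in Diestel): Euler-formula counting for the easy direction, with the correct bipartite refinement $e \le 2v - 4$ for $K_{3,3}$; reduction to the $3$-connected case via blocks and $2$-separations with virtual edges; Thomassen's contractible-edge lemma; and re-expansion of the contracted edge inside a convex embedding. You have also correctly flagged the two genuinely delicate points --- the minor-versus-subdivision translation (where the maximum-degree-three structure of $K_{3,3}$ and the lemma that a $K_5$-minor forces $TK_5$ or $TK_{3,3}$ do the work) and the interleaving case analysis when splitting the contracted vertex's neighbors around the face cycle, where the forbidden subdivisions are manufactured. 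Two small technical debts remain implicit in your sketch: when pasting along a $2$-separation you need each augmented side to be strictly smaller than $G$ for the induction to apply (true for proper separations, but worth saying), and the claim that the virtual edge can be made to bound a face is exactly a re-embedding argument of the kind the paper itself records in Corollary~\ref{cor:exposededge}. As a blind proposal the route is sound and complete in structure; a full write-up would need the re-expansion case analysis carried out explicitly, since that is where all the difficulty of the theorem lives.
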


Bipartite graphs and cactus graphs can both be characterized by
families of forbidden subgraphs.
The following result is folklore,
and a proof can be found in any standard textbook (for example, \cite{diestel2010graph}).
\begin{lemma}\label{lem:bipartiteforbidden}
    A graph is bipartite if and only if it contains no odd cycles.
\end{lemma}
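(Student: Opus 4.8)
The plan is to prove the two directions of the biconditional separately, with the forward direction being nearly immediate and the reverse direction requiring an explicit construction of a bipartition. For the forward direction, I would assume $G$ is bipartite with vertex partition $(A,B)$ and argue that every cycle has even length. Traversing a cycle $v_0 v_1 \cdots v_{k} v_0$, each edge crosses between $A$ and $B$, so the side alternates at every step; since the walk returns to its starting vertex $v_0$, which lies on a fixed side, the number of edges traversed must be even. Hence no odd cycle can exist.

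For the reverse direction, I would first reduce to the connected case, since a graph is bipartite exactly when each of its connected components is, and any odd cycle is contained in a single component. Assuming $G$ is connected with no odd cycle, I would fix a root vertex $r$ and define $A = \{v : d(r,v) \text{ is even}\}$ and $B = \{v : d(r,v) \text{ is odd}\}$, where $d$ denotes graph distance. The claim to be verified is that this is a valid bipartition, that is, that no edge of $G$ joins two vertices lying in the same part.

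The main obstacle is precisely this verification, and it is the only place where the no-odd-cycle hypothesis is actually used. Suppose toward a contradiction that some edge $uv$ has $d(r,u)$ and $d(r,v)$ of equal parity. I would take shortest $r$--$u$ and $r$--$v$ paths, let $w$ be the last vertex they share before diverging, and splice together the $w$--$u$ segment, the edge $uv$, and the $v$--$w$ segment to form a cycle. Its length is $(d(r,u) - d(r,w)) + 1 + (d(r,v) - d(r,w))$, which is odd because $d(r,u)$ and $d(r,v)$ share parity. This contradicts the hypothesis, so no such edge exists, the partition $(A,B)$ is a proper bipartition, and $G$ is bipartite. Combining both directions completes the proof.
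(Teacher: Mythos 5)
Your proposal is correct. Note that the paper does not actually prove this lemma; it cites it as folklore with a pointer to a standard textbook, and your argument is precisely that standard textbook proof (parity alternation for the forward direction, the distance-parity bipartition from a root for the converse). The only point deserving a touch more care is the choice of the vertex $w$: ``the last vertex they share before diverging'' is slightly informal, since two shortest paths can meet several times; the clean fix is to take $w$ to be a common vertex of the two paths maximizing $d(r,w)$, which forces the two segments to be internally disjoint (any later common vertex would contradict maximality, as distance along a shortest path equals graph distance), so the spliced walk really is a cycle. Alternatively, one can observe that any closed walk of odd length contains an odd cycle, which makes the splicing argument immune to such intersections. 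Either patch is routine, so this is a polish issue rather than a gap.
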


Cactus graphs have several equivalent definitions.
A graph is cactus if and only if no two cycles share an edge,
or if and only if every two-connected component is a cycle.
We use the following forbidden graph characterization, from \cite{el1988complexity}.
\begin{lemma}\label{lem:cactusforbidden}
    A graph is cactus if and only if it does not contain $T K_4^-$ as a subgraph.
\end{lemma}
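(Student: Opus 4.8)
The plan is to prove both directions by contraposition, after first recording the structural meaning of $T K_4^-$. Observe that $K_4^-$ has two vertices $x,y$ of degree three joined by three internally disjoint paths (the edge $xy$ together with two paths of length two), so every subdivision $T K_4^-$ is exactly a \emph{theta graph}: two branch vertices $u,v$ joined by three internally disjoint $u$--$v$ paths $P_1,P_2,P_3$, each of length at least one, and conversely every such theta graph is a subdivision of $K_4^-$. The key consequence is that a theta graph contains two distinct cycles, $P_1\cup P_2$ and $P_1\cup P_3$, which share the entire path $P_1$ and hence share at least one edge.

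For the forward direction I would argue contrapositively: if $G$ contains $T K_4^-$ as a subgraph, then by the reformulation above $G$ contains two distinct cycles sharing an edge, so by the definition that in a cactus no two cycles share an edge, $G$ is not a cactus graph. This direction is immediate once the theta-graph description is in place.

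For the converse, again contrapositively, suppose $G$ is not a cactus, so there are two distinct cycles $C_1,C_2$ sharing an edge $e=xy$. Since every cycle lies inside a single block and $e$ lies in a unique block $B$, both $C_1$ and $C_2$ are contained in $B$; as $C_1\neq C_2$, the block $B$ contains at least two distinct cycles and is therefore not itself a single cycle. Hence $B$ is a $2$-connected graph that is not a cycle, and I would extract a $T K_4^-$ from it: choosing any cycle $C\subseteq B$, the fact that $B\neq C$ together with $2$-connectivity yields an \emph{ear}, a path $P$ (possibly a single chord) with both endpoints $u,v$ on $C$, internally disjoint from $C$, and not an edge of $C$. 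The two arcs of $C$ between $u$ and $v$, together with $P$, form three internally disjoint $u$--$v$ paths, that is, a theta graph, which is a $T K_4^-$ inside $B\subseteq G$.

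The main obstacle is this converse: producing the ear $P$ and verifying that the three resulting $u$--$v$ paths are genuinely internally disjoint. The cleanest route is to invoke the open ear decomposition of $2$-connected graphs, since adding the first ear to the base cycle already produces a theta graph; alternatively one argues directly, locating the first vertex $u$ at which a second cycle leaves $C$ and the first vertex $v\neq u$ at which it returns, so that the detour between $u$ and $v$ is the required ear. Care is needed in the degenerate cases where $P$ is a single edge, or where $C_1$ and $C_2$ share a whole path rather than a single edge, but in every case the union $C_1\cup C_2$ is $2$-connected and not a cycle, so the same extraction applies.
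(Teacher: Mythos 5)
Your proof is correct, but there is nothing in the paper to compare it against: the paper does not prove this lemma, it simply imports it as a known characterization from \cite{el1988complexity}. Your argument is the standard self-contained one, and it would make the paper independent of that reference. The identification of $T K_4^-$ with theta graphs, plus the ``no two cycles share an edge'' definition, gives the forward direction immediately; for the converse, passing to the block containing the shared edge (or, as you note, to $C_1 \cup C_2$, which is $2$-connected and not a cycle) and adding one open ear to a cycle produces the theta graph. Two small points deserve an explicit sentence in a polished write-up. First, the claim that \emph{every} theta graph is a subdivision of $K_4^-$ needs a word of justification: in a simple graph at most one of the three internally disjoint paths has length one; if one does, the theta graph is visibly a subdivision of $K_4^-$, and if all three have length at least two it is a subdivision of $K_{2,3}$, which is itself a subdivision of $K_4^-$ (subdivide the central edge of the diamond once), and subdivisions of subdivisions are subdivisions. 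Second, your proof implicitly uses the ``no two cycles share an edge'' definition of cactus, which is the only reading under which the lemma as stated is true --- under the paper's primary definition, which requires connectivity, a disconnected forest would contain no $T K_4^-$ yet fail to be a cactus. That is a defect in the lemma's phrasing rather than in your argument, but it is worth flagging which definition you are proving equivalence with.
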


Cactus graphs have a natural tree-like structure.
A cactus graph can be recursively defined in a manner similar to a tree.
It is easy to see that this definition produces all graphs where no two
cycles share an edge (and thus all cacti):
\begin{definition}
    (1) An isolated vertex or a cycle is a cactus graph.
    (2) Let $G$ be a cactus graph.
    Attach anywhere to $G$ a pendant vertex or a pendant cycle
    (a cycle touching $G$ at one vertex), producing $G'$.
    Then $G'$ is also a cactus graph.
\end{definition}

Given some cactus, we may obtain its \emph{cycle tree}
by collapsing all cycles to a single representative vertex.
To transform a cactus into its cycle tree,
we introduce a representative vertex for each cycle.
Then, we delete all degree-two vertices participating in any cycle,
thus deleting all non-cut vertices of the cactus.
We obtain the cycle tree by connecting each surviving vertex formerly
participating a cycle to its representative vertex.
An induction on the cycle tree
is used to prove the sufficiency condition of Theorem~\ref{thm:main}.
\begin{definition}[Cycle tree]\label{def:blocktree}
    Let $\mathbb{C}$ denote the set of cycles of a cactus graph $G$,
    let $X$ denote the set of cut vertices of $G$,
    and let $E$ denote the set of edges not participating in any cycle.
    The cycle tree $T$ of $G$ is a graph on
    $X \cup \mathbb{C}$ formed by the edge set
    $ \{ a C : a \in X \cap C , C \in \mathbb{C} \} \cup \{ a b : a, b \in X, a b \in E\} $
\end{definition}

Drawings of planar graphs enclose zero or more regions of the plane.
Such regions are called the internal faces of the drawing.
The external face is the unique unenclosed region containing the point at infinity.
Any planar graph can be redrawn such that
the edges touching an internal face in one drawing
touch an external face in another.
This fact is used in the aforementioned induction.
\begin{lemma}\label{lem:externalface}
     Let $G$ be a planar graph and
     let $\Gamma$ be some drawing of $G$ in the plane.
     Let $F$ be an internal face of $G$ in $\Gamma$ and
     let $E(F)$ be the set of edges adjacent to $F$ in $\Gamma$.
     Then there is a drawing $\Gamma'$ of $G$ with external face $F'$
     such that all $E(F)$ are adjacent to $F'$.
\end{lemma}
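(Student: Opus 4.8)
The plan is to use the standard identification of planar drawings with drawings on the sphere via stereographic projection, exploiting the fact that the sphere has no distinguished face. First I would fix the inverse stereographic projection $\sigma^{-1}$, a homeomorphism from $\mathbb{R}^2$ onto $S^2 \setminus \{N\}$, where $N$ is the north pole. Applying $\sigma^{-1}$ to the drawing $\Gamma$ produces a drawing $\Gamma_S$ of $G$ on $S^2$, and since $\sigma^{-1}$ is a homeomorphism it carries the faces of $\Gamma$ bijectively to the connected components of $S^2 \setminus \Gamma_S$, preserving which edges bound which region. The point $N$ lies in the image of the external face of $\Gamma$, so on the sphere every face, including the image of $F$, is an ordinary open region; the sphere distinguishes no outer face.

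Next I would select a point $p$ in the interior of the region of $S^2$ corresponding to $F$, chosen to avoid all vertices and edges of $\Gamma_S$, which is possible since that region is open and nonempty. Composing a rotation of $S^2$ that carries $p$ to $N$ with the forward stereographic projection $\sigma$ yields a homeomorphism $\Phi$ from $S^2 \setminus \{p\}$ onto $\mathbb{R}^2$. Setting $\Gamma' = \Phi(\Gamma_S)$ gives a new plane drawing of $G$, which is a valid drawing precisely because $\Phi$ is a homeomorphism and $p$ meets no point of $\Gamma_S$.

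It then remains to check that $F$ has become the external face $F'$ and that $E(F)$ bounds it. Because $p$ lies in the interior of the $F$-region on the sphere, the punctured $F$-region maps under $\Phi$ onto the unique unbounded component of $\mathbb{R}^2 \setminus \Gamma'$, namely the external face $F'$, while every other face remains bounded. The edges bounding the $F$-region on the sphere are exactly $\sigma^{-1}(E(F))$, and since $\Phi$ preserves incidence between edges and faces, their images are precisely the edges of $\Gamma'$ adjacent to $F'$.

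The main obstacle is purely a matter of careful bookkeeping rather than a genuine difficulty: one must verify that an interior point $p$ of the $F$-region avoiding $\Gamma_S$ exists, and that ``adjacency of an edge to a face'' is preserved by homeomorphisms of the sphere and the plane. Both facts follow once faces are defined as the open connected components of the complement of the drawing, since a homeomorphism maps such components to such components while preserving their topological boundaries. Thus the edge--face incidence structure is a topological invariant, and the argument goes through.
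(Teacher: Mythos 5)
Your proposal is correct and follows essentially the same route as the paper: lift the drawing to the sphere by stereographic projection, move a point of $F$ to the north pole by a rotation, and project back so that $F$ becomes the unbounded face. Your version is slightly more careful about the bookkeeping (faces as open components, incidence preserved by homeomorphisms), but the underlying argument is identical.
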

\begin{proof}
    We embed the drawing $\Gamma$ onto a sphere by a stereographic projection:
    Place a sphere $S$ on the plane containing $\Gamma$.
    Let $s$ be the point of intersection of the plane and sphere, and call $s$ the South pole.
    Call the point diametrically opposite to $s$ on the sphere the North pole, $n$.
    Then map every point $q$ on the plane onto the
    unique point on $S$ intersecting the line $n q$.

    We obtain $\Gamma'$ by rotating the sphere such that its new North pole $n'$ is contained within $F$,
    then reversing the stereographic projection to draw $G$ back on the plane.
    Every point in $F \setminus \{n'\}$ is mapped to the external face $F'$,
    and so $F'$ has the same edges as $F$, as desired.
\end{proof}

An edge is \emph{exposed} if it touches an external face.
As a corollary of Lemma~\ref{lem:externalface},
any edge of a planar graph can be exposed.
\begin{corollary}\label{cor:exposededge}
    Let $G$ be a planar graph. Let $e$ be an edge of $G$.
    There is a drawing of $G$ on the plane exposing $e$.
\end{corollary}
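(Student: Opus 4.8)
The plan is to reduce the statement directly to Lemma~\ref{lem:externalface}. Since $G$ is planar, I would begin by fixing any planar drawing $\Gamma$ of $G$. In $\Gamma$ the chosen edge $e$ borders some face, since every edge of a planar drawing is incident to a face of the embedding.

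I would then split into two cases according to where $e$ sits in $\Gamma$. If $e$ is already incident to the external face of $\Gamma$, then $\Gamma$ itself exposes $e$ and there is nothing further to do. Otherwise $e$ borders only internal faces, so I would select an internal face $F$ with $e \in E(F)$ and apply Lemma~\ref{lem:externalface} to $F$. That lemma yields a drawing $\Gamma'$ whose external face $F'$ is incident to every edge of $E(F)$; in particular $e$ is incident to $F'$, and since $F'$ is external, $e$ is exposed in $\Gamma'$.

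I do not expect any genuine obstacle here: the corollary is essentially the lemma specialized from an entire face to a single one of its bordering edges. The only point that merits a moment's care is the claim that $e$ is incident to at least one face of $\Gamma$, but this is immediate from the definition of a face, as each edge of a planar embedding borders a face on each of its sides (the same face on both sides in the case of a bridge).
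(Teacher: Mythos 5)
Your proof is correct and follows essentially the same route as the paper: pick a face touching $e$ and apply Lemma~\ref{lem:externalface}. In fact your version is slightly more careful than the paper's, since you handle explicitly the case where $e$ already touches the external face (which the lemma, as stated for internal faces, does not directly cover).
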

\begin{proof}
    Let $F$ be some face of $G$ touching $e$ in some drawing of $G$.
    Apply Lemma~\ref{lem:externalface} to $F$ to make $F$ the external face.
    Now $e$ is exposed.
\end{proof}

\section{A sketch of the proof, and some examples}\label{sec:sketch}
In Section~\ref{sec:bipartitecactusnecc}, we show the necessary conditions of Theorem~\ref{thm:main}.
To prove that $S(G)$ is not planar when $G$ is either not bipartite or not a
cactus graph, we show that $S(G)$ contains an induced $T K_{3, 3}$ if either
    \begin{itemize}
        \item $G$ contains an odd cycle (Lemma~\ref{lem:bipartitenecc}), or,
        \item $G$ contains a $T K_4^-$ (Lemma~\ref{lem:cactusnecc}).
    \end{itemize}

Bipartite graphs are characterized by having odd cycles as forbidden subgraphs (Lemma~\ref{lem:bipartiteforbidden}),
and cactus graphs are characterized by having $K_4^-$ as a forbidden minor (Lemma~\ref{lem:cactusforbidden}).
If we show that $G$ must not contain any of these graphs as subgraphs for $S(G)$ to be planar,
then $G$ must be a bipartite cactus graph.

In Section~\ref{sec:bipartitecactussuff},
we show the sufficiency conditions of Theorem~\ref{thm:main}.
This is done by providing a recursive construction for a planar drawing of $S(G)$ when $G$ is a bipartite cactus graph.
The recursion is done on the cycle tree $T$ of $G$:
We first obtain $T$,
then remove one leaf vertex from $T$ to obtain two smaller bipartite cacti $G_1$ and $G_2$.
By induction, we obtain planar drawings of $S(G_1)$ and $S(G_2)$.
Then we `glue' these drawings together on the plane to obtain a planar drawing of $S(G)$.

\begin{example}\label{ex:sk3}
    The graph $K_3$ is not bipartite, and thus $S(K_3)$ is not planar.
    We find a $K_{3, 3}$ subgraph witnessing the non-planarity by taking each
    edge $u v'$, where $v'$ is a shadow vertex.
\end{example}

\begin{figure}[htbp]
    \centering
    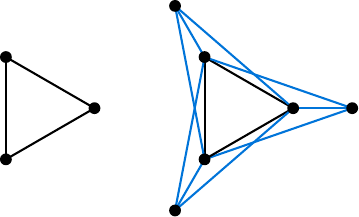
    \caption{The graph $K_3$ is on the left, and its great shadow, $S(K_3)$, is on the right.
        A $K_{3,3}$ subgraph witnessing the non-planarity
        of $S(K_3)$ is highlighted in blue.}
\end{figure}

\begin{example}\label{ex:sk4e}
    The graph $C_4 + v$ is a bipartite cactus graph,
    and indeed $S(C_4 + v)$ is planar.
\end{example}

\begin{figure}[htbp]
    \centering
    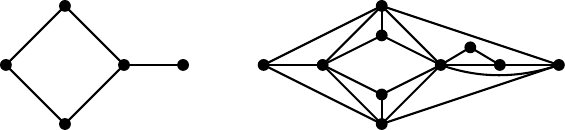
    \caption{The graph $C_4 + v$ is on the left, and a planar drawing of its great shadow is on the right.}
\end{figure}

\section{Bipartite cactus is necessary}\label{sec:bipartitecactusnecc}

We first show that if $G$ contains an odd cycle $C$,
$S(C)$ contains a $T K_{3, 3}$ and therefore $S(G)$ cannot be planar.
Note that a graph is bipartite if and only if it contains no odd cycles.
This fact, along with Lemma~\ref{lem:subgraph}, is sufficient to show that any graph $G$
containing an odd cycle does not have a planar great shadow.
\begin{lemma}\label{lem:bipartitenecc}
    If $G$ contains an odd cycle, then $T K_{3, 3} \subseteq S(G)$.
\end{lemma}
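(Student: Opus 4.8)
The plan is to reduce to a single odd cycle and then exhibit an explicit subdivision. Since $G$ contains an odd cycle $C$, we have $C \subseteq G$, so by Lemma~\ref{lem:subgraph} it suffices to find $T K_{3,3} \subseteq S(C)$. Write $C = v_1 v_2 \cdots v_n v_1$ with $n$ odd and take indices modulo $n$. Recall that in $S(C)$ each shadow vertex $v_i'$ is adjacent precisely to $v_{i-1}, v_i, v_{i+1}$, and that two shadow vertices are never adjacent. When $n = 3$ the graph $S(C)$ already contains $K_{3,3}$ on the parts $\{v_1,v_2,v_3\}$ and $\{v_1',v_2',v_3'\}$ by Example~\ref{ex:sk3}, so I may assume $n \ge 5$.

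For the branch vertices of the subdivision I take $\{v_1,v_2,v_3\}$ on one side and the shadow vertices $\{v_1',v_2',v_3'\}$ on the other. Seven of the nine required connections are already single edges of $S(C)$: indeed $v_2$ is adjacent to all of $v_1',v_2',v_3'$, while $v_1$ is adjacent to $v_1',v_2'$ and $v_3$ is adjacent to $v_2',v_3'$. Only the two ``wrap-around'' pairs $v_1 v_3'$ and $v_3 v_1'$ are missing, and these I realize by two paths that run along the remaining arc $v_4, v_5, \ldots, v_n$ of the cycle together with the shadow vertices sitting over it. Concretely I route $P \colon v_1, v_n', v_{n-1}, v_{n-2}', v_{n-3}, \ldots, v_5', v_4, v_3'$ and $Q \colon v_3, v_4', v_5, v_6', v_7, \ldots, v_{n-1}', v_n, v_1'$, each alternating between an original vertex and a shadow vertex and using the diagonal adjacencies $v_i' \sim v_{i\pm 1}$ to step from one to the next.

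The key point, and the main obstacle, is that $P$ and $Q$ must traverse the same arc in opposite directions yet remain internally disjoint. This is exactly where the nonadjacency of shadow vertices and the oddness of $n$ are used: because shadows are never adjacent, neither path can stay among shadow vertices, so both are forced to interleave originals and shadows; and because $n$ is odd, $P$ occupies the shadow vertices at the odd positions and the originals at the even positions, while $Q$ does the reverse, so the two vertex sets are disjoint and the alternation lands on the correct endpoints $v_3'$ and $v_1'$. I will then check that every consecutive pair along $P$ and $Q$ is an edge of $S(C)$, that the internal vertices of the two paths lie in $\{v_4,\ldots,v_n\} \cup \{v_4',\ldots,v_n'\}$ and hence avoid all six branch vertices, and that no path edge coincides with one of the seven direct edges. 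Together with a count showing each branch vertex meets exactly three of the nine connections, this exhibits the union of the seven edges and the two paths as a subdivision of $K_{3,3}$, giving $T K_{3,3} \subseteq S(C) \subseteq S(G)$.
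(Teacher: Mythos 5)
Your proposal is correct and takes essentially the same approach as the paper: both choose three consecutive cycle vertices and their shadows as branch vertices, use the seven edges of the near-$K_{3,3}$ already present in $S(C)$, and realize the two missing connections by vertex-disjoint paths alternating between original and shadow vertices around the rest of the cycle, with the oddness of the cycle making the two parity classes (and hence the two paths) disjoint. The only cosmetic difference is that you treat $n=3$ separately via Example~\ref{ex:sk3}, whereas the paper's indexing handles that case uniformly, the two paths degenerating to single edges.
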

\begin{proof}
    Let $C$ be an odd cycle contained in $G$.
    We will find a $T K_{3, 3}$ subgraph in $S(C)$.
    Then, by Lemma~\ref{lem:subgraph}, $T K_{3, 3} \subseteq S(G)$.
    Cycle $C$ has at least three vertices on a $P_3$, which we name $u, v, w$.
    Their shadows in $S(C)$ are $u', v', w'$.
    Observe that the set $\{u, v, w, u', v', w'\}$ nearly induces a $K_{3, 3}$
    subgraph in $S(G)$, with one partition given by $\{u, v, w\}$ and the other
    by $\{u', v', w'\}$. The only missing edges are $u w'$ and $w' u$.
    We obtain a $T K_{3, 3}$ subgraph by finding
    two disjoint paths to replace these edges.
    First index each vertex in $C$ beginning from $v$ as
    $v = v_0, w = v_1, \dots, v_{2k} = u$.
    Our disjoint paths are
    $ P_1 := w = v_1, v'_2, v_3, \dots, v'_{2k-1}, v'_{2k} = u' $
    and
    $ P_2 := w' = v'_1, v_2, v'_3, \dots, v'_{2k-1}, v_{2k} = u. $
    Both paths alternate between vertices in $C$ and the shadow vertices,
    each never intersecting with the other.
    Path $P_1$ substitutes for $w u'$
    and
    $P_2$ substitutes for $w' u$,
    giving a $T K_{3, 3}$ subgraph.
\end{proof}

\begin{remark}\label{rmk:altbipartitenecc}
    We obtain a short proof of Lemma~\ref{lem:bipartitenecc} if we
    observe that $S(C)$ contains a subdivided square of $C$.
    Square odd cycles of order at least $4$ are not planar \cite{harary1967criterion},
    and $S(K_3)$ is not planar by Example~\ref{ex:sk3},
    so $S(C)$ is not planar when $|C|$ is odd.
\end{remark}

Now we show that if $G$ is not a cactus graph,
it must contain a topological $K_4^-$.
We will show in Lemma~\ref{lem:cactusnecc} that if there is a $T K_4^-$ in $G$,
then there is a $T K_{3, 3}$ in $S(G)$.
Thus if $S(G)$ is planar, then $G$ must be a cactus graph.

\begin{definition}[$\theta(\ell, m, n)$]\label{def:shadowgraph}
    Let $\ell, n \geq 1$ and $m \geq 2$.
    The graph $\theta(\ell, m, n)$ is a graph on $\ell+m+n$ vertices
    obtained by joining the endvertices of a $P_\ell$ and $P_n$
    to the endvertices of a $P_m$.
    Equivalently, $\theta(\ell, m, n)$ is a graph with two fundamental cycles
    of order $\ell + m$ and $m + n$, joined at a path of length $m$.
\end{definition}
\begin{example}
    The diamond graph, $K_4^-$, is $\theta(1, 2, 1)$.
\end{example}

\begin{lemma}\label{lem:cactusnecc}
    If $T K_4^- \subseteq G$ then $T K_{3, 3} \subseteq S(G)$.
\end{lemma}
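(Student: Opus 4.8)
The goal is to show that containing a topological $K_4^-$ forces $S(G)$ to contain a $T K_{3,3}$. By Lemma~\ref{lem:subgraph}, it suffices to exhibit a $T K_{3,3}$ inside $S(H)$ for $H = T K_4^-$, and since $T K_4^-$ is exactly a $\theta(\ell,m,n)$ graph (by Definition~\ref{def:shadowgraph} and the subsequent example, $K_4^- = \theta(1,2,1)$, so its subdivisions are the graphs $\theta(\ell,m,n)$ with $m \geq 2$, $\ell,n\geq 1$), the cleanest approach is to work directly with $H = \theta(\ell,m,n)$ and construct the $T K_{3,3}$ there. The combinatorial heart of $\theta(\ell,m,n)$ is its two degree-three branch vertices, the endpoints of the shared $P_m$; call them $a$ and $b$. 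I would designate $a,b$ together with suitable shadow vertices as two of the three vertices in one part of the $K_{3,3}$, and then locate the three internally-disjoint $a$--$b$ paths (running along the three arcs $P_\ell$, $P_m$, $P_n$) to supply the branch paths of the subdivision.

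\emph{Key steps in order.} First, I would fix notation for the three arcs of $\theta$ and their shadow vertices in $S(H)$, writing each arc as an alternating sequence of original and shadow vertices exactly as in the proof of Lemma~\ref{lem:bipartitenecc}. Second, I would identify the six branch vertices of the intended $K_{3,3}$: the natural candidates are $\{a, b, x'\}$ and $\{a', b', y'\}$ for an appropriate choice of original vertices $x,y$ and shadow vertices, exploiting that each shadow vertex $v'$ is adjacent to $v$ and to all of $v$'s neighbors, so a shadow vertex sitting on an arc is adjacent to both its arc-neighbors and thus behaves like a degree-two ``bridge'' that can be folded into a branch path. Third, for each of the nine required adjacencies of $K_{3,3}$ I would produce an internally-disjoint path in $S(H)$, alternating between $H$-vertices and their shadows along the three arcs, checking pairwise disjointness. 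The parity flexibility afforded by the shadow vertices (each arc can be traversed using either the original or the shadow vertex at each step) is what lets all three arcs connect the same two branch-vertex sides simultaneously, which is the feature that a bare $\theta$ graph lacks.

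\emph{Main obstacle.} The genuine difficulty is that $\ell$, $m$, $n$ are arbitrary (subject to $m\ge 2$, $\ell,n\ge 1$), so I cannot simply write down a fixed finite gadget; the construction must be uniform in the arc lengths, and I must verify that the nine branch paths remain internally disjoint for \emph{all} admissible $\ell,m,n$. I expect the cleanest way to handle this is to first reduce to a base case by observing that any longer arc can be contracted or that only the parity/structure near the branch vertices $a,b$ matters, then to exhibit explicit alternating paths whose disjointness is manifest from the bipartite-like alternation between $H$-vertices and shadow vertices. A secondary check is that none of the three arcs degenerates: the constraint $m \geq 2$ guarantees the shared path has an internal vertex, which is precisely what is needed to route a branch path through the middle arc without colliding with $a$ or $b$, so I would flag explicitly why $m\ge 2$ (rather than $m\ge 1$) is required.
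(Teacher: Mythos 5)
Your reduction to $\theta(\ell,m,n)$ and the idea of alternating original/shadow paths match the paper's strategy, but two of your core steps fail. First, the branch-vertex choice $\{a,b,x'\}$ and $\{a',b',y'\}$ (with $a,b$ the two degree-three vertices, $u,v$ in the paper's notation) cannot work when the arcs are short, and short arcs must be handled since $K_4^-=\theta(1,2,1)$ itself is an admissible case. All nine targets of $x'$'s side are shadow vertices, and shadow vertices are pairwise non-adjacent, so the three branch paths leaving $x'$ each consume a distinct neighbor of $x'$ as an internal vertex; but $x'$ has only three neighbors ($x$ and its two arc-neighbors), and if $x$ is adjacent to a branch vertex (unavoidable in $K_4^-$, where every arc vertex is adjacent to both $u$ and $v$, or in $\theta(2,2,2)$) then that neighbor is forbidden as an interior vertex, leaving at most two exits for three internally disjoint paths. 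The paper sidesteps this with a different choice: $\Delta_1=\{a_1,a_2,v\}$ and $\Delta_2=\{a_1',a_2',u'\}$, i.e., two \emph{adjacent} arc vertices together with their shadows, and the two degree-three vertices split across the two sides (as $v$ and $u'$); this choice survives all degenerate arc lengths, including $m=2$ and $\ell=2$.

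Second, your fallback for uniformity in $\ell,m,n$ --- ``any longer arc can be contracted'' to reduce to a fixed gadget --- is unsound, because the great shadow does not commute with subdivision: in $S(G)$ the shadow $u'$ is adjacent to all of $N_G(u)$, but after subdividing an edge $uv$ the vertex $u'$ is adjacent to the subdivision vertex and no longer to $v$. Concretely, $S(C_3)$ is non-planar (Example~\ref{ex:sk3}) while $S(C_6)$ is planar (Lemma~\ref{lem:evenplanar}), even though $C_6$ is a subdivision of $C_3$; so finding $T K_{3,3}$ in the shadow of a base case transfers nothing to shadows of its subdivisions. The missing idea that makes a uniform construction tractable is the paper's parity reduction: if any pairwise sum of $\ell,m,n$ is odd, then $\theta$ contains an odd cycle and Lemma~\ref{lem:bipartitenecc} already yields $T K_{3,3}\subseteq S(\theta)$, so one may assume $\ell,m,n$ are all even or all odd, and then write down one explicit alternating path system per parity case. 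Finally, a small misreading: $m\ge 2$ guarantees the two branch vertices are distinct, not that the shared path has an internal vertex ($m=2$ gives none, and any valid construction must handle exactly that case).
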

\begin{proof}
    Every $T K_4^-$ is a $\theta(\ell, m, n)$, so we look for a
    $T K_{3, 3}$ in every $S(\theta(\ell, m, n))$.

    Fix $\ell, m, n$ and let $\theta = \theta(\ell, m, n)$.
    Let $A$ be the fundamental cycle in $\theta$ of order $\ell + m$
    and
    let $B$ be the fundamental cycle in $\theta$ of order $n + m$.
    Let $u$ and $v$ be the endvertices of the shared path between $A$ and $B$.
    Let $a_1, \dots, a_\ell$ be the vertices of $A \setminus B = P_\ell$
    and
    let $b_1, \dots, b_n$ be the vertices of $B \setminus A = P_n$.
    Let $\xi_1, \dots, \xi_{m - 1}$ be the vertices of $A \cap B \setminus \{u, v\}$,
    the vertices on the interior of the shared path $P_m$ between $A$ and $B$.
    We adopt the convention that $a_1, b_1$ and $\xi_1$ are all incident to $u$,
    and the numbering of
    $a_1, \dots, a_\ell$, of $b_1, \dots, b_n$, and of $\xi_1, \dots, \xi_{m - 2}$
    follows each path $P_\ell$, $P_n$ and $P_m$ such that
    $a_\ell, b_n$ and $\xi_{m - 2}$
    are set incident to $v$.

    If any pairwise sum of $\ell, m, n$ is odd, then
    $\theta$ contains an odd cycle.
    Odd cycles are forbidden by Lemma~\ref{lem:bipartitenecc}.
    So either $\ell, m, n$ must all be odd or $\ell, m, n$ must all be even.
    \begin{itemize}
        \item [\textbf{Case 1.}]
        $\ell, m, n$ are all even.
        Choose $\Delta_1 = \{a_1, a_2, v\}$
        and $\Delta_2 = \{a'_1, a'_2, u'\}$ in $S(\theta)$.
        Our $T K_{3, 3}$ is obtained on $\Delta_1 \sqcup \Delta_2$.
        We directly have the edges
        $a_1 a'_1$, $a_2 a'_2$, $a_2 a'_1$, $a_1 a'_2$ and $a_1 u'$.
        To complete the $T K_{3, 3}$,
        it remains to find mutually disjoint paths between vertices
        $u'v$, $u'a_2$, $a'_1v$ and $a'_2v$.
        We give the paths
        \begin{align*}
            P_{u'v}   &:= u', \xi_1, \xi'_2, \dots, \xi_{m-2}, v; \\
            P_{a'_1v} &:= a'_1, u, \xi'_1, \xi_2, \dots, \xi'_{m-2}, v; \\
            P_{a'_2v} &:= a'_2, a_3, \dots, a'_\ell, v; \\
            P_{u'a_2} &:= u', b_1, b_2, \dots, b_n, v', a_\ell, a'_{\ell-1}, \dots, a'_3, a_2.
        \end{align*}
        Observe that $P_{u'v}$ and $P_{a'_1v}$ alternate between vertices on
        $P_m = u, \xi_1, \dots, \xi_{m-2}, v$ and their shadow vertices.
        We also see that $P_{u'a_2}$ goes through all vertices of $P_n$,
        then meets $v'$,
        then alternates with $P_{a'_2v}$ on $P_\ell$ over $a_3, \dots, a_\ell$.
        Our choice of $\Delta_1$, $\Delta_2$, and these paths is
        such that no two pair of paths intersect.

    \item [\textbf{Case 2.}]
        $\ell, m, n$ are all odd.
        Choose $\Delta_1 = \{a_1, a_2, u\}$
        and $\Delta_2= \{a'_1, a'_2, v\}$ in $S(\theta)$.
        As before, our $T K_{3, 3}$ is obtained on $\Delta_1 \sqcup \Delta_2$.
        We directly have the edges
        $a_1 a'_1$, $a_2 a'_2$, $a_2 a'_1$, $a_1 a'_2$ and $a'_1 u$.
        It remains to find mutually disjoint paths between vertices
        $a_1 v$, $a_2v$, $u v$ and $a'_2u$.
        We give the paths
        \begin{align*}
            P_{u v} &:= u, \xi'_1, \xi_2, \dots, \xi_{m-2}, v; \\
            P_{a_1 v} &:= a_1, u', \xi_1, \xi'_2, \dots, \xi'_{m-2}, v; \\
            P_{a_2 v} &:= a_2, a'_3, \dots, a_\ell, v; \\
            P_{u a'_2} &:= u, b_1, b_2, \dots, b_n, v', a_\ell, a'_{\ell-1}, \dots, a_3, a'_2.
        \end{align*}
        The underlying structure is the same as in Case 1,
        but with a slightly different choice of vertices.
        The paths $P_{u v}$ and $P_{a_1 v}$ alternate between vertices on
        $P_m = u, \xi_1, \dots, \xi_{m-2}, v$ and their shadow vertices.
        Similar to the first case, the path $P_{u a'_2}$ starts at $u$,
        passes through all vertices of $P_n$,
        meets $v'$ as before,
        then alternates with $P_{a_2 v}$ to reach $a_2$.
        As before, these paths are disjoint.
    \end{itemize}
    In both cases, a $T K_{3, 3}$ is obtained, proving the claim.
\end{proof}

\begin{figure}[htbp]
    \centering
    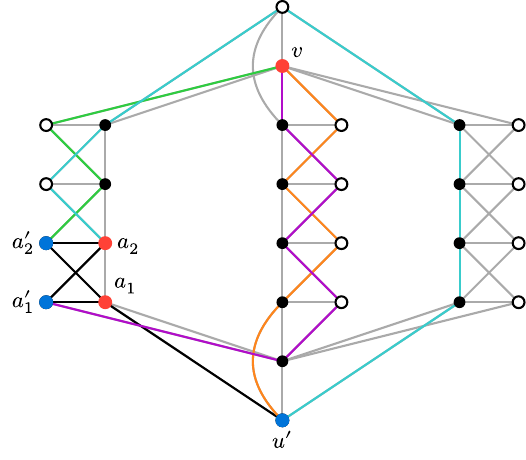
    \caption{The even case (Case 1) of Lemma~\ref{lem:cactusnecc}.
        The figure pictures $S(\theta (4, 6, 4))$.
        Non-gray edges highlight the embedded $T K_{3,3}$.
        White vertices are shadow vertices, and black vertices are originals.
        Blue vertices ($a'_2$, $a'_1$ and $u'$)
        and red vertices ($a_1$, $a_2$, $v$)
        form partitions $\Delta_2$ and $\Delta_1$ respectively.
        Black edges are the isolated edges $u'v$, $u'a_2$, $a'_1v$ and $a'_2v$.
        Colored edges lie on paths.
        The green path is $P_{a'_2v}$,
        the cyan path is $P_{u'a_2}$,
        the orange path is $P_{u'v}$,
        and the violet path is $P_{a'_1v}$.}
    \label{fig:evencase}
\end{figure}

\begin{figure}[htbp]
    \centering
    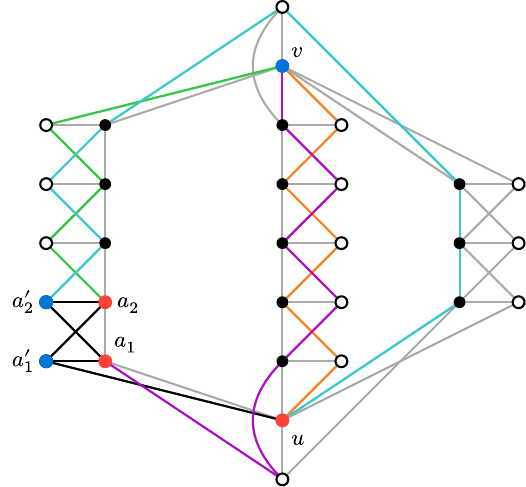
    \caption{The odd case (Case 2) of Lemma~\ref{lem:cactusnecc}.
        The figure pictures $S(\theta (5, 7, 3))$.
        As in Figure~\ref{fig:evencase}, the non-gray edges highlight the embedded $T K_{3,3}$.
        White vertices are shadow vertices, and black vertices are originals.
        Blue vertices (now $a'_1$, $a'_2$, and $v$)
        and red vertices (now $a_1$, $a_2$, $u$)
        form partitions $\Delta_2$ and $\Delta_1$ respectively.
        Black edges are the isolated edges $a_1 a'_1$, $a_2 a'_2$, $a_2 a'_1$, $a_1 a'_2$ and $a'_1 u$.
        Colored edges lie on paths.
        The green path is $P_{a_2v}$,
        the cyan path is $P_{a'_2u}$,
        the orange path is $P_{u v}$,
        and the violet path is $P_{a_1 v}$.}
    \label{fig:oddcase}
\end{figure}

Finally, we prove the necessary condition of Theorem~\ref{thm:main}.
\begin{lemma}
    Let $G$ be a graph. If $S(G)$ is planar, then $G$ must be a bipartite cactus.
\end{lemma}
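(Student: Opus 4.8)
The plan is to establish the contrapositive: if $G$ fails to be a bipartite cactus, then $S(G)$ is non-planar. Since ``bipartite cactus'' is the conjunction of two independently characterized properties, a graph $G$ fails to be one exactly when it is not bipartite \emph{or} not a cactus. I would therefore split into these two failure modes, treating each with its own forbidden-subgraph characterization from the preliminaries, and in both cases exhibit a $T K_{3,3}$ inside $S(G)$ so that Kuratowski's theorem (Theorem~\ref{thm:kuratowski}) forbids planarity.

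First suppose $G$ is not bipartite. By Lemma~\ref{lem:bipartiteforbidden}, $G$ then contains an odd cycle, so Lemma~\ref{lem:bipartitenecc} applies directly to give $T K_{3,3} \subseteq S(G)$. Second suppose $G$ is not a cactus. By the forbidden-subgraph characterization of cacti (Lemma~\ref{lem:cactusforbidden}), $G$ must contain a $T K_4^-$ subgraph, and then Lemma~\ref{lem:cactusnecc} yields $T K_{3,3} \subseteq S(G)$. In either case the presence of a $T K_{3,3}$ subgraph, via Theorem~\ref{thm:kuratowski}, forces $S(G)$ to be non-planar. Reading the contrapositive back, if $S(G)$ is planar then $G$ is both bipartite and a cactus, hence a bipartite cactus.

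Honestly, there is no genuine obstacle remaining at this point: the statement is a synthesis step, and all of the real work has already been carried out in the two witness lemmas (Lemma~\ref{lem:bipartitenecc} and Lemma~\ref{lem:cactusnecc}), which produce the explicit subdivided $K_{3,3}$ in each case. The only thing to verify carefully is that the case split is exhaustive, and this is immediate because a connected graph that is not a bipartite cactus must violate at least one of the two defining conditions, each of which is governed by a forbidden substructure already translated into a non-planarity witness for $S(G)$.
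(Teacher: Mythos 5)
Your proposal is correct and follows essentially the same route as the paper: the contrapositive, the case split into ``contains an odd cycle'' versus ``contains a $T K_4^-$'' (justified by Lemma~\ref{lem:bipartiteforbidden} and Lemma~\ref{lem:cactusforbidden}), the invocation of Lemma~\ref{lem:bipartitenecc} and Lemma~\ref{lem:cactusnecc} to produce a $T K_{3,3}$ in $S(G)$, and Theorem~\ref{thm:kuratowski} to conclude non-planarity. Your assessment that this lemma is a synthesis step with the real work done in the two witness lemmas matches the paper exactly.
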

\begin{proof}
    Let $G$ be a graph that is not a bipartite cactus.
    Then either $G$ contains an odd cycle, or $G$ contains a $T K_4^-$.
    If $G$ contains an odd cycle, then $S(G)$ contains $T K_{3, 3}$ by Lemma~\ref{lem:bipartitenecc}.
    Similarly, if $G$ contains a $T K_4^-$, then $S(G)$ contains a $T K_{3, 3}$ by Lemma~\ref{lem:cactusnecc}.
    In both cases, by Theorem~\ref{thm:kuratowski}, $S(G)$ cannot be planar.
\end{proof}

\section{Bipartite cactus is sufficient}\label{sec:bipartitecactussuff}

To prove sufficiency,
we provide a recursive algorithm to draw the great shadow of any bipartite cactus.
The even cycle is a base case of this algorithm,
and so we must first show that an even cycle has a planar great shadow.
\begin{lemma}\label{lem:evenplanar}
    If $C$ is an even cycle, then $S(C)$ is planar.
\end{lemma}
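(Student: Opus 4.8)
The plan is to exhibit an explicit planar drawing of $S(C)$ directly, using the even length of $C$ in an essential way. Write $C = v_0 v_1 \cdots v_{2k-1} v_0$, and let $v_i'$ be the shadow of $v_i$; by Definition~\ref{def:shadow} the vertex $v_i'$ is adjacent exactly to $v_{i-1}$, $v_i$, and $v_{i+1}$ (indices mod $2k$). Thus each shadow vertex, together with its three incident edges and the two cycle edges $v_{i-1}v_i$ and $v_i v_{i+1}$ that it \emph{spans}, forms a copy of the diamond $K_4^-$ on $\{v_{i-1}, v_i, v_{i+1}, v_i'\}$. First I would draw $C$ as a convex $2k$-gon in the plane; the remaining task is to place the $2k$ shadow vertices and route their edges without crossings.

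The key idea is to place the shadow vertices alternately: $v_i'$ goes in the interior of the polygon when $i$ is even and in the exterior when $i$ is odd. This assignment closes up consistently around the cycle precisely because $2k$ is even, so the parities of $v_0'$ and $v_{2k-1}'$ disagree; for an odd cycle no such consistent two-sided assignment exists, which is exactly the obstruction exploited in Lemma~\ref{lem:bipartitenecc}. To see why the scheme avoids crossings, observe that a cycle edge $v_j v_{j+1}$ is spanned only by $v_j'$ and $v_{j+1}'$, the two shadow vertices adjacent to both of its endpoints; since $j$ and $j+1$ have opposite parity, these two lie on opposite sides of the polygon. Hence on each side no cycle edge is spanned twice, and the diamonds drawn on that side meet one another only at single polygon vertices, forming a chain of diamonds glued at cut vertices, a manifestly planar structure. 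Since interior edges stay interior, exterior edges stay exterior, and the two sides interact only through the shared boundary vertices of $C$, the two halves combine into a planar drawing of $S(C)$.

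The main obstacle is turning this combinatorial picture into a genuinely crossing-free drawing, i.e.\ verifying geometrically that the diamonds on a given side can be realised with disjoint interiors. For each even $i$ I would place $v_i'$ strictly inside the ``ear'' triangle $v_{i-1} v_i v_{i+1}$ and route its three edges within that closed triangle; one then checks that the ears at distinct even vertices have disjoint interiors and share at most a single polygon vertex. (In the smallest case $C_4$ the two interior ears share the diagonal $v_1 v_3$, but keeping each shadow vertex off that diagonal still yields disjoint edge sets.) Examining the cyclic order of edges around a shared vertex such as $v_1$, where the interior edges appear in the order $v_0, v_0', v_2', v_2$ while the lone exterior edge to $v_1'$ lies on the far side, confirms that no two edges cross there. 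The exterior ears at the odd vertices are handled by the identical argument applied outside the polygon, which completes the drawing. As a sanity check, $|V(S(C))| = 4k$ and $|E(S(C))| = 2k + 3\cdot 2k = 8k \le 12k - 6 = 3|V(S(C))| - 6$ for every even cycle, consistent with planarity.
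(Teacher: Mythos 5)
Your proof is correct and is essentially the same as the paper's: both draw $C$ as a convex cycle and place the shadow vertices alternately inside (even index) and outside (odd index), with evenness guaranteeing the alternation closes up consistently. The only difference is cosmetic --- the paper places shadows radially at distances $1/2$ and $3/2$ from the center of a circle, while you place them in ear triangles --- so the two arguments are interchangeable.
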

\begin{proof}
    We provide a planar drawing of $S(C)$ for any even cycle $C$.

    Fix any $v = v_1$, and number each vertex of $C$ from $1$ to $2n$
    in clockwise or counterclockwise order.
    Place each vertex of $C$ in this order on the boundary of a circle $O$
    with center $c$ and unit radius.
    For all $1 \leq k \leq 2n$,
    if $k$ is odd, place $v'_k$ on the exterior of $O$,
    on the line extended beyond $c$ and $v_k$, at distance $3/2$ from $c$.
    If $k$ is even, place $v'_k$ on the interior of $O$,
    on the line segment between $c$ and $v_k$, at distance $1/2$ from $c$.
    Finally, draw straight line segments
    between any two vertices connected by an edge in $S(C)$.
    The resulting drawing is planar by construction.
\end{proof}

\begin{figure}[htbp]
    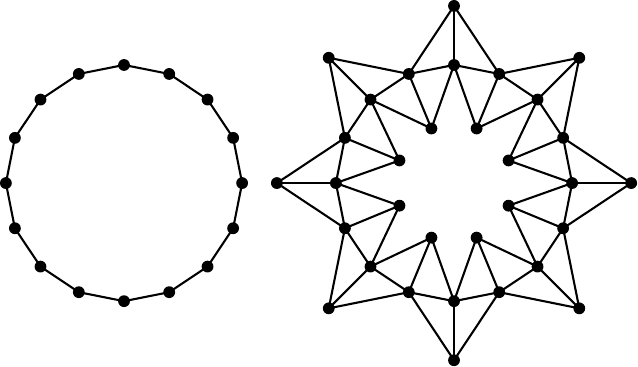
    \caption{The cycle graph $C_{16}$ is on the left.
        On the right is a a planar drawing of $S(C_{16})$,
        produced by the method described within the proof of Lemma~\ref{lem:evenplanar}.}
\end{figure}

\begin{remark}
    The constants $1/2$ and $3/2$ in the proof of Lemma~\ref{lem:evenplanar} are arbitrary.
    The argument still holds if
    each even shadow vertex is placed at any distance $d \in (0, 1)$,
    and
    each odd shadow vertex is placed at any distance $d \in (1, \infty)$.
\end{remark}

\begin{remark}
    The placement of the vertices of $C$ on the circle $O$ need not be
    uniform.
\end{remark}

Finally, we are ready to prove the sufficiency condition of Theorem~\ref{thm:main}.
This is done by providing a planar drawing of any bipartite cactus graph.

\begin{lemma}\label{lem:biglem}
    For any bipartite cactus graph $G$, there is a planar drawing of $S(G)$.
\end{lemma}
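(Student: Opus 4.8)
The plan is to induct on the cycle tree $T$ of the bipartite cactus $G$, following the strategy sketched in Section~\ref{sec:sketch}. The base cases are the simplest cacti: a single edge $K_2$, whose great shadow $S(K_2)=C_4$ is trivially planar, and an even cycle, which is planar by Lemma~\ref{lem:evenplanar}. For the inductive step, I would pick a leaf of the cycle tree $T$ and peel it off. A leaf of $T$ is either a pendant cut vertex (the attachment point of a pendant edge or pendant cycle) or a pendant cycle/edge itself. Removing this leaf decomposes $G$ into a smaller bipartite cactus $G_1$ (the rest of the graph) together with the pendant piece $G_2$ (an edge or an even cycle), where $G_1$ and $G_2$ share exactly one vertex $a$, the cut vertex at which $G_2$ attaches. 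By the inductive hypothesis we have planar drawings of $S(G_1)$ and $S(G_2)$, and the goal is to glue them at the shared structure around $a$ and $a'$.

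The gluing is where the real work lies, so I would set it up carefully. The difficulty is that in $S(G)$ the shadow vertex $a'$ is adjacent to $a$ and to \emph{all} neighbors of $a$ across \emph{both} $G_1$ and $G_2$, and symmetrically every neighbor $b$ of $a$ in $G_2$ has its shadow $b'$ adjacent to $a$. So the local picture around $a$ is not simply the disjoint union of the two shadows: the vertices $a$ and $a'$ are genuinely shared, and their incident edges interleave the two pieces. The key enabling tool is Corollary~\ref{cor:exposededge}: in the drawing of $S(G_1)$ I can expose an edge incident to $a$, and likewise in $S(G_2)$, so that in each drawing the vertices $a$ and $a'$ together with their pendant structure lie on the outer face. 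Concretely, I expect to arrange both drawings so that the edge $a a'$ is exposed in each, placing $a$ and $a'$ on the boundary of the external face with a clear angular sector free on the side facing $G_2$. Then I can rotate and scale the drawing of $S(G_2)$ and insert it into that free sector, identifying the two copies of $a$ and the two copies of $a'$, and re-route the crossing edges $a' b$ (for $b \in N_{G_2}(a)$) and $a b'$ into the newly opened region without introducing crossings.

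The main obstacle is verifying that the edges emanating from $a$ and $a'$ can be cyclically ordered so that the $G_1$-edges and the $G_2$-edges occupy disjoint arcs around each of these two vertices, which is exactly what permits a crossing-free merge. I would argue this by showing that in the exposed drawings the rotation system at $a$ can be taken with all of $G_2$'s edges consecutive, and similarly at $a'$; this is possible precisely because $a$ is a cut vertex, so no edge of $G_1$ forces its way between two $G_2$-edges. I expect to handle the pendant-edge case and the pendant-cycle case together, noting that when $G_2$ is a single edge the local shadow is just a $C_4$ that attaches cleanly, and when $G_2$ is an even cycle the drawing from Lemma~\ref{lem:evenplanar} already places $a$ on the outer circle $O$ with its shadow structure positioned so that one side is free. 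After confirming planarity of the merged drawing, the induction closes: every bipartite cactus is built by successively attaching pendant edges and pendant even cycles, so $S(G)$ is planar for all such $G$, completing the sufficiency direction and hence Theorem~\ref{thm:main}.
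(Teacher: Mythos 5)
Your proposal is correct and follows the same overall strategy as the paper's proof: induction on the cycle tree, with the isolated edge and the even cycle (Lemma~\ref{lem:evenplanar}) as base cases, and a gluing step powered by Corollary~\ref{cor:exposededge}. The one real difference is the decomposition in the inductive step. The paper deletes an \emph{edge} of the cycle tree, which forces two subcases: either the two pieces share a vertex $v$, and the drawings are glued by overlapping the exposed edge $vv'$, or the pieces are vertex-disjoint and joined in $G$ by a bridge $uv$, in which case the paper must re-insert the three edges $uv$, $uv'$, $vu'$ and draw them nested. You instead always peel off a leaf block (a pendant edge or pendant even cycle) while keeping the attachment vertex $a$ in \emph{both} pieces; since $a$ is a cut vertex, $S(G)$ is exactly the union of $S(G_1)$ and $S(G_2)$ identified along the edge $aa'$, so the single vertex-sharing glue suffices and the paper's second case disappears. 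This is a mild but genuine simplification, and your worry about rotation systems resolves itself: once each drawing is deformed into a closed half-plane with $aa'$ on the boundary line, the $G_1$-edges and the $G_2$-edges at $a$ and at $a'$ automatically occupy disjoint arcs, with no re-routing needed. Two small corrections: $S(K_2)$ is not $C_4$ but the diamond $K_4^-$ (the edges $uu'$ and $vv'$ are present; only the two shadow vertices are non-adjacent), which is still planar, so nothing breaks; and the drawing from Lemma~\ref{lem:evenplanar} does not by itself expose the edge $aa'$ (each radial edge $v_kv_k'$ bounds triangular internal faces), so the appeal to Corollary~\ref{cor:exposededge} is genuinely needed there, not merely a convenience --- fortunately you invoke it anyway.
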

\begin{proof}
    Let $T$ be the cycle tree of $G$.
    We prove this statement by induction on $|T|$.
    In the base case, $T$ has one vertex.
    Then either $G$ has one vertex and $S(G)$ is an isolated edge,
    or $G$ is a cycle.
    If $G$ is a cycle, then by Lemma~\ref{lem:bipartitenecc},
    $G$ is a cycle of even order and a planar drawing
    of $S(G)$ is given by applying Lemma~\ref{lem:evenplanar}.

    Now suppose that Lemma~\ref{lem:biglem} holds for all cactus graphs with $|T| < k$.
    Further suppose $|T| > 1$.
    Choose some leaf $\ell$ of $T$. Let $p$ be the parent of $\ell$.
    Then,
    disconnect the edge $p \ell$.
    We obtain two smaller graphs:
    $G_\ell$, whose cycle tree is the isolated vertex $\ell$,
    and $G_p$,
    whose cycle tree is $T - \ell$.
    Then,
    by induction,
    we obtain a planar drawing of $S(G_\ell)$ and a planar drawing of $S(G_p)$.
    We now want to join these planar drawings together to obtain a planar drawing of $S(G)$.

    There are two cases to consider.
    If $p$ is a cycle vertex and $\ell$ is a cut vertex participating in the cycle of $p$,
    then $G_\ell$ and $G_p$ share a vertex $v$.
    So we must join the drawings of $S(G_\ell)$ and $S(G_p)$ on the edge $v v'$.
    Otherwise,
    if either $p$ is not a cycle vertex or $p$ is a cycle vertex and $\ell$ is a cut vertex not participating in $p$,
    then $G_\ell$ and $G_p$ are vertex-disjoint,
    and the disconnection of the edge $p \ell$ in $T$ corresponds to the disconnection of some edge $u v$ in $G$.
    Thus to join $S(G_\ell)$ and $S(G_p)$,
    we must introduce the edges $u v$, $u v'$ and $v u'$.

    \begin{itemize}
        \item [\textbf{Case 1.}] $G_\ell$ and $G_p$ share a vertex $v$.
        By Corollary~\ref{cor:exposededge},
        we can draw $S(G_\ell)$ and $S(G_p)$ to both expose the edge $v v'$.
        Let $R_\ell$ and $R_p$ be finite regions of the plane bounding the planar drawing of $S(G_\ell)$ and $S(G_p)$.
        As $v v'$ is on the exterior face of $S(G_\ell)$, one can place $v v'$ on the boundary of $R_\ell$.
        Similarly, one can place $v v'$ on the boundary of $R_p$.
        Then $R_r$ and $R_p$ can be placed on the plane such that both copies of $v v'$ overlap.
        This provides our desired drawing,
        seen in Figure~\ref{fig:edgeglue}.

    \item [\textbf{Case 2.}] $G_\ell$ and $G_p$ are vertex-disjoint.
        Let $u v$ be the edge deleted by disconnecting $p \ell$ (with $u \in G_\ell$ and $v \in G_p$).
        Use Corollary~\ref{cor:exposededge} to draw $S(G_\ell)$ (resp. $S(G_p)$) on the plane exposing the edge $u u'$ (resp. $v v'$).
        Let $R_\ell$ be a finite region of the plane
        bounding the planar drawing of $S(G_\ell)$.
        As $u$ and $u'$ are on the external face,
        one can place $u$ and $u'$ on the boundary of $R_\ell$.
        Similarly let $R_p$ be the region bounding the planar drawing of $S(G_p)$,
        with $v$ and $v'$ on its boundary.

        We construct $S(G)$ from $S(G_\ell)$ and $S(G_p)$ by
        introducing the edges $u v$, $u v'$ and $v u'$.
        The three edges can be drawn with $u v'$ and $v u'$ enclosing $u v$.
        The resulting drawing is planar,
        which is seen in Figure~\ref{fig:treeind}.
    \end{itemize}
    In both cases,
    a planar drawing of $S(G)$ is obtained,
    as desired.
\end{proof}

\begin{figure}[htbp]
    \centering
    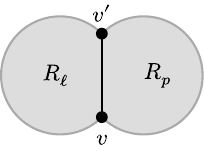
    \caption{Case 1 of Lemma~\ref{lem:biglem}.
        We adjoin the drawings of $S(G_\ell)$ and $S(G_p)$,
        each wholly contained within regions $R_\ell$ and $R_p$,
        by overlapping the common edge $v v'$.}
    \label{fig:edgeglue}
\end{figure}

\begin{figure}[htbp]
    \centering
    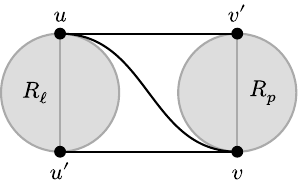
    \caption{Case 2 of Lemma~\ref{lem:biglem}.
        We adjoin the drawings of $S(G_\ell)$ and $S(G_p)$,
        each wholly contained within regions $R_\ell$ and $R_p$,
        by introducing the edges $u v'$, $v u'$ and $u v$.}
    \label{fig:treeind}
\end{figure}

\section*{Acknowledgements}
I am thankful to
Christiaan van de Sande
for an early discussion on the necessary conditions of Theorem~\ref{thm:main},
to
Xingxing Yu
for Remark~\ref{rmk:altbipartitenecc},
and to
Pemma Reiter,
Kirtus Leyba,
and Jamie Waitt
for feedback that improved the exposition in the introduction and preliminaries.
I am especially thankful to
Joshua J. Daymude
and to
Zilin Jiang
for their extensive feedback and corrections to this document.

\bibliographystyle{plain}
\bibliography{refs}

\end{document}